\documentclass[a4paper,12pt]{article}
\usepackage{amsmath}
\usepackage{amssymb}
\usepackage{amsopn}
\usepackage{amstext}
\usepackage{amsthm}
\usepackage{amscd}

\newtheorem{lemma}{Lemma}[section]
\newtheorem{theorem}[lemma]{Theorem}

\newtheorem{remark}[lemma]{Remark}
\newtheorem{corollary}[lemma]{Corollary}

\def\sp{\mathrm{sp}}
\def\M{\mathrm M}

\def\Ran{\mathrm{Ran}\,}
\def\diag{\mathrm{diag\,}}
\def\A{\mathcal A}
\def\B{\mathcal B}

\def\I{\mathrm I}

\def\dcup{\mathop{\bigcup}\limits}

\def\csr{\mathrm{csr\,}}

\setlength{\textwidth}{14cm}
\setlength{\textheight}{23cm}
\voffset -6mm \hoffset -1mm

\begin{document}

\title{Relatively spectral homomorphisms and $K$--injectivity\thanks{Project supported by Natural Science Foundation of China (no.10771069) and Shanghai
Leading Academic Discipline Project(no.B407)}}
\author{Yifeng Xue\thanks{email: yfxue@math.ecnu.edu.cn}\\
Department of Mathematics, East China Normal University\\
Shanghai 200241, P.R.China}
\date{}
\maketitle
\begin{abstract}
Let $\A$ and $\B$ be unital Banach algebras and $\phi\colon\A\rightarrow\B$ be a unital continuous homomorphism.
We prove that if $\phi$ is relatively spectral (i.e., there is a dense subalgebra $X$ of $\A$
such that $\sp_\B(\phi(a))=\sp_\A(a)$ for every $a\in X$) and has dense range, then
$\phi$ induces monomorphisms from $K_i(\A)$ to $K_i(\B)$, $i=0,1$.
\end{abstract}

\baselineskip 17pt

\section{Introduction and Preliminaries}

Let $\A,\,\B$ be  unital Banach algebras and $\phi\colon\A\rightarrow\B$ be a unital
homomorphism (i.e., $\phi(1)=1$). If $\sp_\B(\phi(a))=\sp_\A(a)$ for all $a\in\A$, we say that
$\phi$ is spectral, here $\sp_\A(a)$ denotes the spectrum of $a$ in $\A$. Recall from
\cite[Definition 10]{Nica} that $\phi$ is said to be
relatively spectral (to $X$) if there is a dense subalgebra $X$ of $\A$ such that
$\sp_\B(\phi(x))=\sp_\A(x)$ for all $x\in X$. Furthermore, $\phi$ is said to be completely relatively
spectral if $\phi_n\colon\text{M}_n(\A)\rightarrow \text{M}_n(\B)$ is relatively spectral
(to $\text{M}_n(X)$) for each $n$, where
$\phi_n((a_{ij})_{n\times n})=(\phi(a_{ij}))_{n\times n}$, $(a_{ij})_{n\times n}\in
\text{M}_n(\A)$.

It is known that if $\phi$ is spectral and has dense range, then $\phi$ induces an isomorphism
$K_*(\A)\cong K_*(\B)$ (cf. \cite{Bo}). Recently, B. Nica shows that if $\phi$ is
completely relatively spectral and $\Ran(\phi)=\phi(\A)$ is dense in $\B$, then $\phi$ also induces an
isomorphism $K_*(\A)\cong K_*(\B)$ (cf. \cite[Theorem 2]{Nica}).

Since we do not know if a relatively spectral homomorphism is completely relatively spectral
in general, except some special cases listed in \cite{Nica}, it is significant to investigate if
the relatively spectral homomorphism $\phi$ with dense range could induce an isomorphism between $K_*(\A)$
and $K_*(\B)$.

In this short note, we prove  following result which partially generalizes \cite[Theorem 2]{Nica}.
\begin{theorem}\label{Th}
Let $\A,\,\B$ be two unital Banach algebras and $\phi$ be a unital continuous homomorphism
from $\A$ to $\B$ with dense range. If $\phi$ is relatively spectral, then
$\phi$ induces a monomorphism $K_*(\A)\rightarrow K_*(\B)$.
\end{theorem}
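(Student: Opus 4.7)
The plan is to prove injectivity of $\phi_*$ on $K_0$ and on $K_1$ separately, and to reduce each case to a ``near-identity'' statement that can be attacked with the holomorphic functional calculus. Focus first on $K_1$: suppose $u \in \mathrm{GL}_n(\A)$ with $\phi_*[u] = 0$ in $K_1(\B)$. After stabilizing, one may assume $\phi_n(u)$ lies in the identity component $\mathrm{GL}_n(\B)^0$, so it factors as
\[
\phi_n(u) = \exp(b_1)\exp(b_2)\cdots\exp(b_k), \qquad b_j \in \M_n(\B).
\]
Since $\phi(\A)$ is dense in $\B$, $\phi_n(\M_n(\A))$ is dense in $\M_n(\B)$, so one picks $c_j \in \M_n(\A)$ with $\phi_n(c_j)$ close to $b_j$ and sets $u_0 := \exp(c_1)\cdots\exp(c_k) \in \mathrm{GL}_n(\A)^0$. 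Then $w := u\,u_0^{-1} \in \mathrm{GL}_n(\A)$ has $\phi_n(w)$ arbitrarily close to $1_n$, and it is enough to show such a $w$ lies in $\mathrm{GL}_n(\A)^0$.

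To handle this reduced problem, use density of $\M_n(X)$ in $\M_n(\A)$ to approximate $w$ by some $y \in \M_n(X)$; for $y$ close enough to $w$, openness of invertibles gives $y \in \mathrm{GL}_n(\A)$, and $\phi_n(y)$ remains close to $1_n$ in $\M_n(\B)$. If one can then show that $\sp_{\M_n(\A)}(y)$ is concentrated in a small neighborhood of $1$, the holomorphic functional calculus in $\M_n(\A)$ produces $\log y \in \M_n(\A)$, whence $y \in \mathrm{GL}_n(\A)^0$; openness of $\mathrm{GL}_n(\A)^0$ then forces $w \in \mathrm{GL}_n(\A)^0$. The $K_0$ injectivity is obtained by the same scheme: given idempotents $p,q \in \M_n(\A)$ with $[\phi_n(p)] = [\phi_n(q)]$ in $K_0(\B)$, lift a conjugating element $v \in \mathrm{GL}_{n+k}(\B)^0$ realizing this equivalence, via the $K_1$-argument just sketched, to some $c \in \mathrm{GL}_{n+k}(\A)$ with $\phi_{n+k}(c)$ approximating $v$, and then apply the standard perturbation-of-idempotents argument to produce an idempotent in $\M_{n+k}(\A)$ Murray--von Neumann equivalent to $q \oplus 1_k$.

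The main obstacle is extracting matrix-level spectral control from the purely scalar hypothesis $\sp_\A(x) = \sp_\B(\phi(x))$ for $x \in X$. Nica's completely relatively spectral condition gives precisely the matrix-level identity $\sp_{\M_n(\A)}(y) = \sp_{\M_n(\B)}(\phi_n(y))$ for all $y \in \M_n(X)$, which would close the argument at once; here, however, only the scalar version is available. The proof must therefore establish a \emph{local} matrix-level spectral invariance: a quantitative statement, leaning on density of $\phi(\A)$, continuity of resolvents, and the scalar relative spectrality on $X$, showing that for $y \in \M_n(X)$ whose image $\phi_n(y)$ is close to $1_n$ in $\M_n(\B)$, $\sp_{\M_n(\A)}(y)$ is forced to coincide with (or at least lie in a tight neighborhood of) $\sp_{\M_n(\B)}(\phi_n(y))$. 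Converting scalar spectral invariance on $X$ into this local matrix-level spectral coincidence, without assuming the full completely relatively spectral hypothesis, is the real technical heart of the theorem.
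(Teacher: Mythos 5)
Your $K_1$ reduction (factor $\phi_n(u)$ into exponentials, lift the exponents by density, reduce to an element $w\in GL_n(\A)$ with $\phi_n(w)$ near $1_n$) coincides with the paper's opening moves, but from that point your plan rests on a step you yourself flag as open: that for $y\in\M_n(X)$ with $\phi_n(y)$ close to $1_n$, the spectrum $\sp_{\M_n(\A)}(y)$ is confined near $1$ so that $\log y$ exists. That is a local form of the completely relatively spectral hypothesis, which is exactly what the theorem is designed \emph{not} to assume, and nothing in the hypotheses is known to produce it; so the proposal is incomplete at its core rather than merely short on details. The paper's actual resolution avoids any matrix-level spectral control: it never shows that such a $y$ is invertible, let alone localizes its spectrum. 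Instead, its key lemma (Lemma \ref{lb}) proves by induction on matrix size, via a Schur-complement factorization, that every $z\in\M_n(X)$ with $\|1_n-\phi_n(z)\|<\tfrac13$ can be \emph{approximated} arbitrarily well by elements of $\M_n(X)\cap GL_n^0(\A)$; the only spectral input is the scalar one, applied to the corner entry $z_4-z_3x_1z_2$ through Corollary \ref{ca}, which in turn is proved by joining to an invertible along a segment and invoking relative spectrality of $\phi_{[0,1]}$ on $C([0,1],\A)$ (Lemma \ref{la}). In the main proof the element being approximated, $AB_0$, is already invertible in $\M_n(\A)$ (a product of invertibles), so an approximant in $GL_n^0(\A)$ within $1/\|(AB_0)^{-1}\|$ forces $AB_0\in GL_n^0(\A)$; no spectral invariance for matrices, local or global, is ever used.

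Your $K_0$ argument has a second, independent gap. Lifting a connecting invertible $v\in GL^0(\B)$ to some $c\in GL^0(\A)$ with $\phi(c)$ close to $v$ is fine (exponentials plus density), but what you then control is only that $\phi\bigl(c\tilde pc^{-1}\bigr)$ is norm-close to $\phi(\tilde q)$ in matrices over $\B$; the standard perturbation-of-idempotents argument needs $c\tilde pc^{-1}$ close to $\tilde q$ in matrices over $\A$, and there is no way to pull the estimate back through $\phi$, which need not be bounded below or even injective. The paper sidesteps idempotents entirely: by Lemma \ref{la} the loop homomorphism $\phi_{\mathbf S^1}\colon C(\mathbf S^1,\A)\to C(\mathbf S^1,\B)$ is again relatively spectral with dense range, so the $K_1$-injectivity already established applies to it, and injectivity of $\phi_*$ on $K_0(\A)=K_1(S\A)$ follows by chasing the split exact sequence relating $K_1(S\A)$, $K_1(C(\mathbf S^1,\A))$ and $K_1(\A)$. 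To salvage your route you would need either to prove the local matrix spectral invariance you identified (which would be a new result essentially of the strength of complete relative spectrality) or to replace it, as the paper does, by an approximation statement in the spirit of Lemma \ref{lb}, and to rework the $K_0$ step so that all norm estimates take place upstairs in $\A$.
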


We will prove this theorem in \S 2. Now we introduce some notations used in next section.
For a Banach  algebra $\A$ with unit $1$, let $GL(\A )$ (resp. $GL_0(\A)$) denote the group of
invertible elements in $\A$ (resp. the connected component of $1$ in $GL(\A )$).
For a Banach algebra $\A$, we view $\A^n$ and ${\text M}_n(\A)$ as the set of all $n\times 1$
and $n\times n$ matrices over $\A$ respectively. The norm on $\A^n$ (resp. $\text{M}_n(\A)$)
is given by $\|(a_1,\cdots,a_n)^T\|=\sum\limits^n_{i=1}\|a_i\|$ (resp. $\|(a_{ij})_{n\times n}\|
=\sum\limits^n_{i,j=1}\|a_{ij}\|$). Set $GL_n(\A)=GL({\text
M}_n(\A)),\,GL^0_n(\A)=GL_0({\text M}_n(\A))$.

Suppose $\A$ is unital and let $x\in GL(\A)$. Denote by $[x]$ the equivalence class of $x$ in
$GL(\A)/GL_0(\A)$. The $K_1$--group of $\A$, denoted $K_1(\A)$ is defined as
$K_1(\A)=\bigcup\limits_{n=1}^{\infty}GL_n(\A)/GL_n^0(\A)$, where $GL_n(\A)/GL_n^0(\A)\subset
GL_{n+1}(\A)/GL_{n+1}^0(\A)$ in the sense that $[x]\mapsto[\diag(x,1)]$, $\forall\,x\in
GL_n(\A)/GL_n^0(\A)$, $n=1,2,\cdots$. We can define the $K_0$--group of $\A$ by
$K_0(\A)=K_1((S\A)^+)$, where
$$
(S\A)^+=\{f\in C([0,1],\A)\vert\,f(0)=f(1)=\text{constant}\}.
$$
More detailed information about $K_0(\A)$ and $K_1(\A)$ can be found in \cite{Bl}.

\section{Proof of main theorem}

In this section, we assume that $\A$ and $\B$ are unital Banach algebras and
$\phi$ is a  unital continuous homomorphism from $\A$ to $\B$.

Let $M$ be a compact Hausdorff space and let $C(M,\A)$ denote the Banach algebra consisting of
all continuous maps $f\colon M\rightarrow\A$ with the norm $\|f\|=
\sup\limits_{t\in M}\|f(t)\|$. When $M=\mathbf S^1$, we set $\Omega(\A)=C(\mathbf S^1,\A)$.
Let $X$ be a dense subalgebra of $\A$ and put $M(X)=\{f\colon M\rightarrow X\
\text{continuous}\}$. Define a homomorphism $\phi_M\colon C(M,\A)\rightarrow C(M,\B)$ by
$\phi_M(f)(t)=\phi(f(t))$, $\forall\,f\in C(M,\A)$ and $t\in M$.

\begin{lemma}\label{la}
Let $\phi$ be a relatively spectral (to $X$) homomorphism with $\Ran(\phi)$ dense in $\B$.
Then $\phi_M$ is a relatively spectral (to $M(X)$) homomorphism with $\Ran(\phi_M)$ dense in
$C(M,\B)$.
\end{lemma}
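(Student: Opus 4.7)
The statement breaks into two assertions, which I would prove separately.

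\textbf{Step 1: Spectrum formula.} First I would establish the standard fact that for any unital Banach algebra $\mathcal C$ and any $g\in C(M,\mathcal C)$, one has
\[
\sp_{C(M,\mathcal C)}(g)=\bigcup_{t\in M}\sp_{\mathcal C}(g(t)).
\]
The inclusion ``$\supseteq$'' is immediate from evaluation at $t$. For ``$\subseteq$'', if $\lambda\notin\bigcup_{t}\sp_{\mathcal C}(g(t))$, then $t\mapsto (g(t)-\lambda)^{-1}$ is well-defined, and a standard compactness/continuity argument (the inverse map is continuous on $GL(\mathcal C)$) shows it is continuous, hence a two-sided inverse of $g-\lambda$ in $C(M,\mathcal C)$.

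\textbf{Step 2: Spectral equality on $M(X)$.} Apply Step~1 with $\mathcal C=\A$ and with $\mathcal C=\B$. For $f\in M(X)$ the values $f(t)$ all lie in $X$, so by the relative spectrality of $\phi$ we have $\sp_{\B}(\phi(f(t)))=\sp_{\A}(f(t))$ for every $t\in M$. Taking unions over $t$ and invoking Step~1 twice gives
\[
\sp_{C(M,\B)}(\phi_M(f))=\bigcup_{t\in M}\sp_{\B}(\phi(f(t)))=\bigcup_{t\in M}\sp_{\A}(f(t))=\sp_{C(M,\A)}(f),
\]
which is the relative spectrality of $\phi_M$ to $M(X)$.

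\textbf{Step 3: Dense range.} Fix $g\in C(M,\B)$ and $\varepsilon>0$. Since $M$ is compact Hausdorff, $g$ is uniformly continuous, so there is a finite open cover $\{U_i\}_{i=1}^{n}$ of $M$ and points $t_i\in U_i$ with $\|g(t)-g(t_i)\|<\varepsilon/2$ for $t\in U_i$. Using density of $\phi(\A)$ in $\B$, choose $a_i\in\A$ with $\|\phi(a_i)-g(t_i)\|<\varepsilon/2$. Let $\{\psi_i\}$ be a continuous partition of unity subordinate to $\{U_i\}$ (available since $M$ is compact Hausdorff, hence normal) and set $f=\sum_{i=1}^{n}\psi_i\, a_i\in C(M,\A)$. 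Then for every $t\in M$,
\[
\|\phi_M(f)(t)-g(t)\|\le\sum_{i:\,\psi_i(t)\neq 0}\psi_i(t)\bigl(\|\phi(a_i)-g(t_i)\|+\|g(t_i)-g(t)\|\bigr)<\varepsilon,
\]
so $\|\phi_M(f)-g\|<\varepsilon$, proving that $\Ran(\phi_M)$ is dense in $C(M,\B)$.

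The only mildly delicate point is Step~1 (continuity of the pointwise inverse), but this is a standard consequence of the openness of $GL(\mathcal C)$ and the continuity of inversion; everything else is routine. Note that for the dense range argument we only need density of $\phi(\A)$ in $\B$, not density of $X$; the role of $X$ is confined to Step~2.
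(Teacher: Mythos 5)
Your argument follows essentially the same route as the paper's: density of the range is obtained by covering $g(M)$ with finitely many $\epsilon$-balls and patching approximants together with a partition of unity (the paper chooses the $a_i$ in $X$ rather than in $\A$, but the estimate is the same), and relative spectrality of $\phi_M$ is reduced to the pointwise relative spectrality of $\phi$ on the values $f(t)\in X$; your explicit formula $\sp_{C(M,\mathcal C)}(g)=\bigcup_{t\in M}\sp_{\mathcal C}(g(t))$ is just a packaged form of the paper's final step, where pointwise invertibility together with continuity of inversion in $GL(\A)$ yields invertibility in $C(M,\A)$. The one thing you omit: the conclusion that $\phi_M$ is relatively spectral \emph{to $M(X)$} presupposes, by the definition of relative spectrality, that $M(X)$ is a dense subalgebra of $C(M,\A)$; the paper secures this by rerunning the same partition-of-unity argument with $\B=\A$ and $\phi=\mathrm{id}$, approximating the values of an arbitrary $f\in C(M,\A)$ by elements of $X$. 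Your closing remark that the role of $X$ is confined to Step~2 is therefore slightly off: the density of $X$ in $\A$ is also needed (and your Step~3 argument, with $a_i$ taken in $X$, supplies it at once). With that one-line addition the proof is complete and matches the paper's in substance.
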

\begin{proof}Given $g\in C(M,\B)$ and $\epsilon>0$. Since $g$ is continuous and $M$ is compact,
it follows that $g(M)$ is also compact in $\B$. Thus, there are $y_1,\cdots,y_n\in g(M)$
such that $g(M)\subset\dcup_{i=1}^nO(y_i,\epsilon)$, where $O(y_i,\epsilon)=\{y\in\B\vert\,
\|y-y_i\|<\epsilon\}$. Set $U_i=g^{-1}(O(y_i,\epsilon))$, $i=1,\cdots,n$. Then $\{U_1,\cdots,U_n\}$
is an open cover of $M$. Choose a partition of unity $\{f_1,\cdots,f_n\}$ subordinate
to this cover, so that each $f_i$ is a continuous function from $M$ to $[0,1]$ with
support contained in $U_i$ and $\sum\limits^n_{i=1}f_i(t)=1$, $\forall\,t\in M$.

From $\overline{\phi(X)}=\B$, we can find $a_1,\cdots,a_n\in X$ such that
$\|\phi(a_i)-y_i\|<\epsilon$, $i=1\,\cdots,n$. Set $f_\epsilon(t)=\sum\limits^n_{i=1}a_if_i(t)$, $\forall\,
t\in M$. Then $f\in M(X)$ and
$$
\|\phi(f_\epsilon(t))-g(t)\|\le\sum\limits^n_{i=1}\|\phi(a_i)-y_i\|f_i(t)+\sum\limits^n_{i=1}
\|y_if_i(t)-g(t)f_i(t)\|<2\epsilon,
$$
$\forall\,t\in M$. Thus, $\phi_M(M(X))$ is dense in $C(M,\B)$.

If we set $\B=\A$ and $\phi=\mathrm{id}$ in above argument, then we have $\overline{M(X)}
=C(M,\A)$.

Now we show that $\phi_{M}$ is relatively spectral. But it is enough to prove that
$\phi_M(f)$ is invertible in $C(M,\B)$ for $f\in M(X)$ implies that $f$ is
invertible in $C(M,\A)$. Since $\phi_M(f)$ is invertible in $C(M,\B)$, it follows that
$\phi(f(t))$ is invertible in $\B$, $\forall\,t\in M$. Thus, from the relatively spectral
property of $\phi$, we have $f(t)\in GL(\A)$, $\forall\,t\in M$. This means that $f\in GL(C(M,\A))$.
\end{proof}

\begin{corollary}\label{ca}
Let $\phi$ be a relatively spectral (to $X$) homomorphism with dense range. Suppose there is
$a\in X$ such that $\|1-\phi(a)\|<1$. Then $a\in GL_0(\A)$.
\end{corollary}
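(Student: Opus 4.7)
The plan is to exhibit the straight-line segment from $1$ to $a$ as a continuous path lying entirely in $GL(\A)$; this gives $a\in GL_0(\A)$ for free. Concretely I would set $\gamma(t)=1+t(a-1)$ for $t\in[0,1]$ and verify $\gamma(t)\in GL(\A)$ for every $t$. The natural obstruction to doing this by a Neumann-series argument in $\A$ is that we have no control on $\|1-a\|$ itself; the whole role of the relative spectral hypothesis is to supply instead a spectral localisation of $a$.

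First I would apply the relative spectral property to $a\in X$ to pin down $\sp_\A(a)$. Since $\sp_\B(\phi(a))$ sits in the closed disk about $1$ of radius $\|1-\phi(a)\|$,
$$\sp_\A(a)=\sp_\B(\phi(a))\subseteq\{z\in\mathbb C:|1-z|\leq r\},\qquad r:=\|1-\phi(a)\|<1.$$
In particular $0\notin\sp_\A(a)$, so $a\in GL(\A)$. Next, the polynomial spectral mapping theorem applied to $p_t(z)=1+t(z-1)$ yields
$$\sp_\A(\gamma(t))=\{1+t(\lambda-1):\lambda\in\sp_\A(a)\}\subseteq\{w\in\mathbb C:|1-w|\leq tr\}.$$
Because $tr\leq r<1$, the origin is excluded from every such spectrum, so $\gamma(t)\in GL(\A)$ for all $t\in[0,1]$. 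The map $\gamma\colon[0,1]\to GL(\A)$ is continuous with $\gamma(0)=1$ and $\gamma(1)=a$, so $a\in GL_0(\A)$.

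I do not anticipate a serious obstacle. The one point worth emphasising is that the step $\sp_\A(a)=\sp_\B(\phi(a))$ uses $a\in X$ crucially, not merely $a\in\A$. An essentially equivalent alternative would be to construct $b=\log a$ by holomorphic functional calculus on the simply connected neighbourhood of $\sp_\A(a)\subseteq\{|1-z|\leq r\}$ that avoids the origin, obtaining $a=\exp(b)$ and the path $t\mapsto\exp(tb)$; I would favour the linear segment for transparency and because it avoids invoking functional calculus.
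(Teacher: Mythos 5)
Your proof is correct, but it takes a genuinely different route from the paper's. The paper does not use the spectral mapping theorem at all: it picks an auxiliary element $x\in X$ with $\|1-x\|\le 1/(1+\|\phi\|)$ (this uses the density of $X$ and the continuity of $\phi$), so that $x\in GL_0(\A)$ and $\|1-\phi(x)\|<1$, and then considers the segment $f(t)=(1-t)x+ta$, which is a path \emph{inside} $X$; since $\|1-\phi_{\I}(f)\|<1$, the element $\phi_{\I}(f)$ is invertible in $C(\I,\B)$, and Lemma \ref{la} (relative spectrality of $\phi_{\I}$ with respect to $\I(X)$) yields $f\in GL(C(\I,\A))$, i.e.\ a path in $GL(\A)$ joining $x\in GL_0(\A)$ to $a$. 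The reason the paper joins $a$ to $x$ rather than directly to $1$ is that $1$ need not belong to $X$, so the segment from $1$ to $a$ need not be $X$-valued and the pointwise relative-spectrality argument of Lemma \ref{la} would not apply to it. You sidestep this entirely: relative spectrality is invoked only once, at the single element $a$, to get $\sp_\A(a)=\sp_\B(\phi(a))\subseteq\{z:|1-z|\le r\}$ with $r<1$, and the affine spectral mapping $\sp_\A(1+t(a-1))=1+t(\sp_\A(a)-1)$ keeps the straight segment from $1$ to $a$ inside $GL(\A)$. Your argument is more elementary in its inputs and in fact uses strictly fewer hypotheses: it needs neither the density of $\Ran\phi$, nor the density of $X$, nor the continuity of $\phi$, nor Lemma \ref{la}. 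What the paper's version buys is uniformity of method: it reduces the corollary to Lemma \ref{la}, the same device used throughout Section 2, at the cost of invoking the extra hypotheses.
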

\begin{proof}Choose $x\in X$ such that $\|1-x\|\le\dfrac{1}{1+\|\phi\|}$.
Then $x\in GL_0(\A)$ and $\|1-\phi(x)\|<1$. Put $f(t)=(1-t)x+ta$, $\forall\,t\in\I=[0,1]$.
Then $f\in\I(X)$ and $\|1-\phi_I(f)\|<1$. So $f\in GL(C(I,\A))$ with $f_0=x$ and $f_1=a$
by Lemma \ref{la}, which means that $a\in GL_0(\A)$.
\end{proof}

\begin{lemma}\label{lb}
Let $\phi$ be a relatively spectral ( to $X$) homomorphism with dense range and let
$z\in\M_n(X)$ with $\|1_n-\phi_n(z)\|<\dfrac{1}{\,3\,}$, where $1_n$ is the unit of $\M_n(\A)$.
Then for any $\epsilon>0$, there is $z'\in\M_n(X)\cap GL^0_n(\A)$ such that $\|z-z'\|<\epsilon$.
\end{lemma}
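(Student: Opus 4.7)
The plan is to prove the lemma by induction on $n$, using Corollary \ref{ca} as the base case and a Schur-complement block reduction for the inductive step. As a preliminary, I would replace $X$ by $X+\mathbb{C}\cdot 1$ to assume $1\in X$ without loss of generality: density is preserved, and the relative spectral property extends because $\sp_\A(x+\lambda)=\sp_\A(x)+\lambda$ in any unital Banach algebra. This ensures that the elementary matrices I manufacture below lie in $\M_n(X)$.

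The base case $n=1$ is immediate: $\|1-\phi(z)\|<1/3<1$ and Corollary \ref{ca} give $z\in GL_0(\A)\cap X$, so $z'=z$ works. For the inductive step, write $z=\begin{pmatrix} z_{11} & b^T\\ c & D\end{pmatrix}$ with all entries in $X$, and set
$$\alpha=\|1-\phi(z_{11})\|,\ \beta=\|\phi_{n-1}(b^T)\|,\ \gamma=\|\phi_{n-1}(c)\|,\ \delta=\|1_{n-1}-\phi_{n-1}(D)\|,$$
so that the hypothesis reads $\alpha+\beta+\gamma+\delta<1/3$. In particular $\alpha<1/3$, so $z_{11}\in GL_0(\A)$ by Corollary \ref{ca}. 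I pick $u\in X$ close to $z_{11}^{-1}$ (using density of $X$) and form the elementary matrices
$$L=\begin{pmatrix}1 & 0\\ -cu & 1_{n-1}\end{pmatrix},\qquad R=\begin{pmatrix}1 & -ub^T\\ 0 & 1_{n-1}\end{pmatrix}\in \M_n(X)\cap GL_n^0(\A).$$
A direct computation gives
$$LzR=\begin{pmatrix} z_{11} & (1-z_{11}u)b^T\\ c(1-uz_{11}) & D'\end{pmatrix},\quad D':=D-2cub^T+cuz_{11}ub^T\in\M_{n-1}(X),$$
and $D'$ converges to the Schur complement $D-cz_{11}^{-1}b^T$ as $u\to z_{11}^{-1}$. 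Applying the inductive hypothesis to $D'$ produces $D''\in\M_{n-1}(X)\cap GL_{n-1}^0(\A)$ arbitrarily close to $D'$, and I set
$$z':=L^{-1}\begin{pmatrix} z_{11} & 0\\ 0 & D''\end{pmatrix}R^{-1}\in\M_n(X)\cap GL_n^0(\A).$$
Writing $z-z'=L^{-1}[LzR-\diag(z_{11},D'')]R^{-1}$, I control $\|z-z'\|$ by the sum $\|1-z_{11}u\|\|b\|+\|c\|\|1-uz_{11}\|+\|D'-D''\|$ times the bounded factor $\|L^{-1}\|\|R^{-1}\|$; this can be made $<\epsilon$ by choosing $u$ close enough to $z_{11}^{-1}$ and $D''$ close enough to $D'$.

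The heart of the argument, and the main obstacle, is showing that the inductive hypothesis is actually applicable to $D'$, i.e.\ that $\|1_{n-1}-\phi_{n-1}(D')\|<1/3$. In the limit $u\to z_{11}^{-1}$ I get the sharper bound
$$\|1_{n-1}-\phi_{n-1}(D-cz_{11}^{-1}b^T)\|\le \delta+\frac{\beta\gamma}{1-\alpha}<\frac{1}{3},$$
and continuity then passes it back to $D'$ for $u$ sufficiently close. The strict inequality is a tight AM--GM estimate: $\beta\gamma\le(\beta+\gamma)^2/4\le(1/3-\alpha-\delta)^2/4$, combined with $1-\alpha>2/3$, gives $\beta\gamma/(1-\alpha)\le 3(1/3-\delta)^2/8<1/3-\delta$. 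This tightness explains why the statement is phrased with the constant $1/3$ rather than the weaker $<1$ threshold that suffices in Corollary \ref{ca}.
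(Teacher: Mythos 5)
Your induction is correct, but it is the mirror image of the paper's, and the difference is worth spelling out. The paper peels off the \emph{last} row and column: the inductive hypothesis is applied to the upper-left $m\times m$ corner $z_1$ of $z$ itself, which automatically inherits the bound $\|1_m-\phi_m(z_1)\|<\frac{1}{3}$ from the sum norm, and the Schur-type term that remains is the single entry $z_4-z_3x_1z_2$, for which Corollary \ref{ca} only needs the weak threshold $\|1-\phi(\cdot)\|<1$ (the paper's estimate lands near $\frac13+\frac49<1$). You peel off the \emph{first} row and column, so Corollary \ref{ca} handles the scalar corner and the induction must be fed the $(n-1)\times(n-1)$ Schur complement $D'$; that is the one place where your route has to do extra work, namely the verification $\delta+\beta\gamma/(1-\alpha)<\frac13$, and your AM--GM estimate does establish it correctly (together with the continuity step passing from $D-cz_{11}^{-1}b^T$ to $D'$). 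Two caveats. First, the estimate is not tight and does not ``explain'' the constant $\frac13$: your own computation generalizes to $\beta\gamma/(1-\alpha)\le\frac{(c-\delta)^2}{4(1-c)}<c-\delta$ for any threshold $c$ below $\frac45$, so the inductive propagation survives with much larger constants; in the paper $\frac13$ is simply a convenient choice making $\|\phi_m(x_1)\|<4$, the scalar bound $<1$, and the application in the proof of Theorem \ref{Th} go through with room to spare. Second, the reduction to $1\in X$ proves the lemma with $X$ replaced by $X+\mathbb{C}\cdot 1$, whose conclusion $z'\in\M_n(X+\mathbb{C}\cdot 1)$ is formally weaker than the stated $z'\in\M_n(X)$; the enlargement is in fact unnecessary, since $L$ and $R$ are only ever needed as elements of $GL_n^0(\A)$, and the final product $z'=L^{-1}\diag(z_{11},D'')R^{-1}$ has all entries visibly in the original $X$ once $D''\in\M_{n-1}(X)$ (the units cancel), exactly as in the paper, where $z'$ is written out entrywise to make this point.
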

\begin{proof}
When $n=1$, the statement is true by Corollary \ref{ca}. We assume that the statement is true
for $1\le n\le m$. We now prove the argument is also true for $n=m+1$.

Let $y=\phi_{m+1}(z)\in\phi_{m+1}(\M_{m+1}(X))$ with $\|1_{m+1}-y\|<\dfrac{1}{\,3\,}$. Write
$z=(z_{ij})_{m+1\times m+1}$ (resp. $y=(y_{ij})_{m+1\times m+1}$) as $z=\begin{pmatrix}z_1&z_2\\
z_3&z_4\end{pmatrix}$ (resp. $y=\begin{pmatrix}y_1&y_2\\ y_3&y_4\end{pmatrix}$),
where $y_{ij}=\phi(z_{ij})$, $i,j=1,\cdots,m+1$, $z_1=(z_{ij})_{m\times m}\in\M_m(X)$ and
$$
z_2=\begin{pmatrix}z_{1\,m+1}\\ \vdots\\ z_{m\,m+1}\end{pmatrix},\quad
z_3=\begin{pmatrix}z_{m+1\,1}&\dots & z_{m+1\,m}\end{pmatrix},\quad z_4=z_{m+1\,m+1}.
$$
Then $\|1_m-y_1\|<\dfrac{1}{\,3\,}$, $\|y_2\|<\dfrac{1}{\,3\,}$, $\|y_3\|<\dfrac{1}{\,3\,}$
and $\|1-y_4\|<\dfrac{1}{\,3\,}$. By assumption, there is $z_1'\in GL_m^0(\A)\cap\M_m(X)$
such that $\|z_1-z_1'\|<\dfrac{\epsilon}{3(\|\phi_m\|+1)}$. Pick $x_1\in\M_m(X)$ such that
\begin{equation}\label{eq1}
\|(z_1')^{-1}-x_1\|<\dfrac{\epsilon}{3(\|z_1'\|+1)(\|z_3\|+1)(\|\phi_m\|+1)}.
\end{equation}
Set
$$
z'=\begin{pmatrix}1_n&\ 0\\ z_3x_1&\ 1\end{pmatrix}
\begin{pmatrix}z_1'&z_2\\ \ 0&\ z_4-z_3x_1z_2\end{pmatrix}
=\begin{pmatrix}z_1'&z_2\\ z_3x_1z_1'&z_4\end{pmatrix}\in\M_{m+1}(X).
$$
Then $\|z-z'\|\le\|z-z'\|+\|z_3\|\|1_m-x_1z_1'\|<\epsilon$.

Note that $\|1_m-\phi_m(z_1)\|<\dfrac{1}{\,3\,}$ and $\|\phi_m(z_1)-\phi_m(z_1')\|
<\dfrac{1}{\,3\,}$. So $\|1_m-\phi_m(z_1')\|<\dfrac{2}{\,3\,}$ and hence $\|(\phi_m(z_1'))^{-1}\|
<3$. Consequently, $\|\phi_m(x_1)\|<4$ by (\ref{eq1}). Therefore,
\begin{equation}\label{eq2}
\|1-\phi(z_4-z_3x_1z_2)\|<\|1-y_4\|+\|y_3\|\|\phi_m(x_1)\|\|y_2\|<1.
\end{equation}
Applying Corollary \ref{ca} to (\ref{eq2}), we have $z_4-z_3x_1z_2\in GL_0(\A)$. Thus,
we can deduce that $\begin{pmatrix}z_1'&z_2\\ \ 0&\ z_4-z_3x_1z_2\end{pmatrix}\in GL^0_{m+1}(\A)$.
Since $\begin{pmatrix}1_n&\ 0\\ z_3x_1&\ 1\end{pmatrix}\in GL_{m+1}^0(\A)$, it follows that
$z'\in GL_{m+1}^0(\A)$. This completes the proof.
\end{proof}

Now we give the proof of Theorem \ref{Th} as follows.

\begin{proof}Let $G\in GL_n(\A)$ such that $G_0=\phi_n(G)\in GL_n^0(\B)$ for some $n$. We will
prove $G\in GL_n^0(\A)$. Since $X$ is dense in $\A$ and $\phi$ has dense range, we can find
$A\in\M_n(X)$ with $\|A-G\|$ small enough so that $A\in GL_n(\A)$ with $[A]=[G]$ in
$GL_n(\A)/GL_n^0(\A)$ and $\phi_n(\A)\in GL_n^0(\B)$.
Noting that $\phi_n(A)$ can be written as $\phi_n(A)=e^{b_1}\cdots e^{b_s}$ for some
$b_1,\cdots,b_s\in\text M_n(\B)$,
we can find $a_1,\cdots,a_s\in\text M_n(\A)$ with $\|\phi_n(a_i)-b_i\|$ small enough, $i=1,\cdots,s$, such that
$$
\|\phi_n(A)^{-1}-\phi_n(e^{-a_1}\cdots e^{-a_s})\|<\dfrac{1}{6(\|\phi_n(A)\|+1)}.
$$
Choose $B_0\in\M_n(X)$ such that
$$
\|e^{-a_1}\cdots e^{-a_s}-B_0\|<\dfrac{1}{6(\|e^{a_1}\cdots e^{a_s}\|+1)
(\|\phi_n\|+1)(\|\phi_n(A)\|+1)}.
$$
Then
$B_0\in GL_n^0(\A)$ and
$$
\|I_n-\phi_n(AB_0)\|\le\|\phi_n(A)\|\|\phi_n(A)^{-1}-\phi_n(B_0)\|<\frac{1}{\,3\,}.
$$
Therefore there exists $Z\in GL_n^0(\A)\cap\M_n(X)$ such that $\|AB_0-Z\|<\dfrac{1}{\|(AB_0)^{-1}\|}$
by Lemma \ref{lb}. So, $AB_0\in GL_n^0(\A)$ and hence $G\in GL_n^0(\A)$.

Since $\phi_{\mathbf S^1}$ is relatively spectral and $\Ran(\phi_{\mathbf S^1})$ is dense in
$\Omega(\B)$
by Lemma \ref{la}, we get that the induced
homomorphism $(\phi_{\mathbf S^1})_*\colon K_1(\Omega(\A))\rightarrow K_1(\Omega(\B))$ is
injective by above argument. Thus, from the commutative diagram of split exact sequences
\begin{equation}\label{xyz}
\begin{CD}
0@> >>K_1(S\B)@> >>K_1(\Omega(\B))@> >>K_1(\B)@> >>0 \\
@. @A\phi_*AA @A(\phi_{\mathbf S^1})_*AA @A\phi_* AA\\
0@> >>K_1(S\A)@> >>K_1(\Omega(\A))@> >>K_1(\A)@> >>0,
\end{CD}
\end{equation}
we get that $\phi_*\colon K_1(S\A)\rightarrow K_1(S\B)$ is injective, that is,
$\phi_*\colon K_0(\A)\rightarrow K_0(\B)$ is injective.
\end{proof}
\begin{remark}
{\rm
If $\csr(C(\bf S^1,\B))\le 2$, where $\csr(\cdot)$ is the connected stable rank of Banach
algebras introduced by Rieffel in \cite{R}, then $\phi_*$ is also surjective.

In fact, when $\csr(C(\mathbf S^1,\B))\le 2$, we have $\csr(\B)\le 2$ by
\cite[Proposition 8.4]{N2}. So the natural homomorphism $i_\B\colon GL(\B)/GL_0(\B)\rightarrow
K_1(\B)$ is surjective by using the proof of Theorem 10.10 in \cite{R}. On the other hand,
if $\phi$ is relatively spectral and has dense range, then the induced homomorphism
$\phi_*\colon GL(\A)/GL_0(\A)\rightarrow GL(\B)/GL_0(\B)$ is injective by the proof of Theorem
\ref{Th}. To show $\phi_*$ is surjective, let $b\in GL(\B)$ and pick $a\in X$ such that
$\|\phi(a)-b\|<\dfrac{1}{\|b^{-1}\|} $. Then $\phi(a)\in GL(\B)$ with $[b]=[\phi(a)]$ and hence
$a\in GL(\A)$. Thus, $\phi_*([a])=[b]$ and consequently, $\phi_*\colon K_1(\A)\rightarrow K_1(\B)$ is surjective.

By the above argument, we also have $(\phi_{\mathbf S^1})_*$ is surjective. Thus, using the
commutative diagram (\ref{xyz}), we obtain that $\phi_*\colon K_1(S\A)\rightarrow K_1(S\B)$
is surjective.

Especially, when $\B$ is of topological stable rank one, $\csr(\B)\le 2$ and
$\csr(C(\mathbf S^1,\B))\le 2$. So $\phi_*$ is surjective.
}
\end{remark}
\vspace{3mm}

\noindent{\bf{Acknowledgement.\,}}The author is grateful to the referee for his (or her)
very helpful comments and suggestions.
\vspace{2mm}

\end{document}